\documentclass[11pt,oneside,onecolumn,reqno]{amsart}

\textwidth17cm
\textheight22cm
\oddsidemargin-.10in
\evensidemargin-.10in
\usepackage{amssymb,latexsym,mathrsfs,
amsxtra,amscd,amsfonts,amsthm,amsmath,verbatim,epsfig}

\usepackage{color}

\usepackage[colorlinks,pagebackref=true]{hyperref}

\usepackage[english]{babel}

\newcommand{\arxiv}[1]{\href{http://arxiv.org/abs/#1}{{\tt arXiv:#1}}}

\makeatletter

% \@addtoreset{equation}{section}
% \def\theequation{\thesection.\@arabic \c@equation}
\makeatletter
\def\widebreve#1{\mathop{\vbox{\m@th\ialign{##\crcr\noalign{\kern3\p@}%
      \brevefill\crcr\noalign{\kern3\p@\nointerlineskip}%
      $\hfil\displaystyle{#1}\hfil$\crcr}}}\limits}

\def\brevefill{$\m@th \setbox\z@\hbox{$\braceld$}%
  \bracelu\leaders\vrule \@height\ht\z@ \@depth\z@\hfill\braceru$}
\makeatletter

\def\@citecolor{blue}
\def\@linkcolor{blue}
\def\@urlcolor{blue}
\def\@urlcolor{blue}

\numberwithin{equation}{section}

\def\dim{\operatorname{dim}}
\def\depth{\operatorname{depth}}

\def\grade{\operatorname{grade}}

\def\reg{\operatorname{reg}}

\def\Proj{\operatorname{Proj}}

\def\ZZ{\mathbb Z}
\def\QQ{\mathbb Q}

\newcommand{\NN}{\mathbb N}
\newcommand{\mm}{\mathfrak m}

\newcommand{\ov}{\overline}
\newcommand{\lm}{{\lambda}}

\newcommand{\R}{\mathcal R}

\newcommand{\PP}{\mathcal P}
\newcommand{\I}{{\bf I}}
\newcommand{\n}{{\bf{n}}}

\newcommand{\m}{{\bf {m}}}
\newcommand{\rrr}{{\bf{r}}}
\newcommand{\ii}{i=1,\ldots,s}
\newcommand{\idl}{{I_1},\ldots,{I_s}}
\newcommand{\id}{{I_1}\cdots{I_s}}

\newcommand{\lf}{\left(}
\newcommand{\rg}{\right)}
\newcommand{\fil}{\mathcal F}

\newcommand{\ga}{{G(\fil)}}

\newcommand{\po}{P_{\fil}}
\newcommand{\ho}{H_{\fil}}
\newcommand{\hf}{H_{\I}}
\newcommand{\pf}{P_{\I}}
\newcommand{\crr}{complete reduction }
\newcommand{\bl}{\begin{lemma}}
\newcommand{\el}{\end{lemma}}
\newcommand{\bt}{\begin{theorem}}
\newcommand{\et}{\end{theorem}}
\newcommand{\ben}{\begin{enumerate}}
\newcommand{\een}{\end{enumerate}}
\newcommand{\bpf}{\begin{proof}}
\newcommand{\eepf}{\end{proof}}
\newcommand{\beqn}{\begin{eqnarray*}}
\newcommand{\eeqn}{\end{eqnarray*}}
\newcommand{\beqnn}{\begin{eqnarray}}
\newcommand{\eeqnn}{\end{eqnarray}}
\newcommand{\bd}{\begin{definition}}
\newcommand{\ed}{\end{definition}}
\newcommand{\bp}{\begin{proposition}}
\newcommand{\ep}{\end{proposition}}
\newcommand{\bc}{\begin{corollary}}
\newcommand{\ec}{\end{corollary}}
\newcommand{\bex}{\begin{example}}
\newcommand{\eex}{\end{example}}

\newcommand{\wrt}{with respect to }
\newcommand{\CM}{Cohen-Macaulay }
\newcommand{\wlg}{ Without loss of generality }

\theoremstyle{plain}
\newtheorem{theorem}{Theorem}[section]
\newtheorem{corollary}[theorem]{Corollary}
\newtheorem{proposition}[theorem]{Proposition}
\newtheorem{lemma}[theorem]{Lemma}

\newtheorem{example}[theorem]{Example}
\newtheorem{definition}[theorem]{Definition}

\theoremstyle{remark}
\newtheorem{remark}[theorem]{Remark}
\numberwithin{equation}{theorem}
\begin{document}
\title[multigraded regularity, reduction vectors and postulation vectors]{multigraded regularity, reduction vectors and postulation vectors}
\author{Parangama Sarkar}
\address{ Mathematics Department, Indian Institute of Technology Bombay, Mumbai, India 400076}
\email{parangama@math.iitb.ac.in}

\subjclass[2010]{Primary 13D45, 13A30 13D40, 13E05}
\thanks{{\em Keywords}: multigraded filtrations, multigraded regularity, joint reductions, complete reductions, local cohomology modules, filter-regular sequence, multi-Rees algebra, associated multigraded ring, postulation vectors.
}
\thanks{The author is supported by CSIR Fellowship of Government of India.} 

\begin{abstract}
We relate the set of complete reduction vectors  of a $\ZZ^s$-graded admissible filtration of ideals $\fil$ with the set of multigraded regularities of $G(\fil).$ We prove $\reg (G(\fil))=\reg(\R(\fil)).$ We establish a relation between the sets of complete reduction vectors of $\fil$ and postulation vectors of $\fil$ under some cohomological conditions.
\end{abstract}
\maketitle

\thispagestyle{empty}
\section{introduction}
Let $R=\bigoplus\limits_{n\in\NN}R_{n}$ be a standard Noetherian $\NN$-graded ring defined over a local ring $(R_{{{0}}},\mm)$ and $M=\bigoplus\limits_{n\in\ZZ}M_{n}$ be a finitely generated $\ZZ$-graded $R$-module. Let \[ a(M) = \left\{
  \begin{array}{l l}
   {\max\{n\mbox{ }|\mbox{ } M_n\neq 0\}}  & \quad \text{if $M\neq 0$ }\\
    {-\infty} & \quad \text{if $M=0.$ }
  \end{array} \right. \] The Castelnuovo-Mumford regularity of $M$ is defined by $$\reg(M)=\max\{a(H_{R_{+}}^i(M))+i\mbox{ }|\mbox{ }i\geq 0\}.$$ The Castelnuovo-Mumford regularity is a fundamental invariant of a module or a sheaf which measures 
its complexity. A. Ooishi \cite[Lemma 4.8]{aooishi}, M. Johnson and B. Ulrich \cite[Proposition 4.1]{JU} proved that $\reg(\R(I))=\reg(G(I))$ for any ideal $I$ in a Noetherian local ring $(R,\mm).$ Later, observing the close relationships of the invariants $a(H_{\R(I)_{+}}^i(\R(I)))$ and $a(H_{G(I)_{+}}^i(G(I))),$ Trung \cite[Corollary 3.3]{trung}, proved the same result. He also showed that for an $\mm$-primary ideal $I$ in a Noetherian local ring of dimension $d\geq 1$ and a  minimal reduction $J$ of $I,$ $$a_d+d\leq r_J(I)\leq\{a_i+i:i=0,\ldots,d\}$$ where $a_i=a(H_{G(I)_{+}}^i(G(I)))$ \cite[Proposition 3.2]{T1}. Using this relation, in $1993,$ Marley \cite[Corollary 2.2]{marley2}, proved that for an $\mm$-primary ideal $I$ in a \CM local ring of dimension $d\geq 1,$ if $\depth G(I)\geq d-1$ then $r(I)=n(I)+d=a_d+d$ where $n(I)$ is the postulation number of $I$ and $r(I)$ is the reduction number of $I. $
\\In this context, it is natural to ask : What is an analogue of Castelnuovo-Mumford regularity in multigraded case and relation between multigraded regularity of multi-Rees algebra and associated multigraded rings. 
\\The objective of this paper is to study multigraded regularity of blowup algebras, reduction vectors and postulation vectors of a $\ZZ^s$-graded admissible filtration of ideals. Among the themes presented are : $(1)$ equality of the sets of multigraded regularities of two blowup algebras associated to a $\ZZ^s$-graded admissible filtration of ideals $\fil,$ $(2)$ relation between the set of complete reduction vectors of $\fil$ and the set of multigraded regularities of blowup algebras associated to it and $(3)$ relation among complete reduction vectors and postulation vectors of $\fil$ under some cohomological conditions.
\\First we set up notation and recall few definitions. Throughout this paper, $(R,\mm)$ denotes a Noetherian local ring of dimension $d$ with infinite residue field. Let $ I_1,\ldots,I_s$ be $\mm$-primary ideals of $R.$ We denote the collection $(\idl)$ by $\I.$ For $s\geq 1,$ we  put ${\bf {e}}=(1,\ldots,1),\; {\bf{0}}=
 (0,\ldots,0)$ and ${\bf {e_i}}=(0,\ldots,1,\ldots,0)\in{\ZZ}^s$ for all $\ii$ where $1$ occurs at $i$th position. For $\n=(n_1,\ldots,n_s)\in{\ZZ}^s,$ we write ${\I}^{\n}=I_{1}^{n_1}\cdots I_{s}^{n_s}$ and $\n^+=(n_1^+,\ldots,n_s^+)$ where $n_i^+=\max\{0,n_i\}.$ For $\alpha=({\alpha}_1,\ldots,{\alpha}_s)\in{\NN}^s,$ we put $|\alpha|={\alpha}_1+\cdots+{\alpha}_s.$ We define $\m\geq\n$ if $m_i\geq n_i$ for all $\ii.$ By the phrase ``for all large $\n$", we mean $\n\in\NN^s$ and $n_i\gg 0$ for all $\ii.$
 A set of ideals $\fil=\lbrace\fil(\n)\rbrace_{\n\in \ZZ^s}$ is called a $\ZZ^s$-graded {\it{$\I$-filtration}} if for all $\m,\n\in\ZZ^s,$
{\rm (i)} ${\I}^{\n}\subseteq\fil(\n),$
 {\rm (ii)} $\fil(\n)\fil(\m)\subseteq\fil(\n+\m)$  and {\rm (iii)} if $\m\geq\n,$ $\fil(\m)\subseteq\fil(\n).$ 

Let $t_1,\ldots,t_s$ be indeterminates. For $\n\in\ZZ^s,$ we put ${\bf t}^{\n}=t_{1}^{n_1}\cdots t_{s}^{n_s}$ and denote the $\NN^s$-graded {\it{Rees ring of $\fil$}} by $\mathcal{R}(\fil)=\bigoplus\limits_{\n\in \NN^s}
{\fil}(\n){\bf{t}}^{\n}$ and the $\ZZ^s$-graded {\it{extended Rees ring of $\fil$}} by 
$\mathcal{R}'(\fil)=\bigoplus \limits_ {\n\in{\ZZ}^s}{\fil}(\n){\bf{t}}^{\n}.$ For an $\NN^s$-graded ring $S=\bigoplus\limits_{\n\geq{\bf{0}}}S_{\n},$ we denote the ideal $\bigoplus \limits_{\n\geq {\bf{e}}}S_{\n}$ by $S_{++}.$ Let $\ga=\bigoplus\limits_{\n\in{\NN}^s}{{\fil}(\n)}/{{\fil}(\n+{\bf e})}$ be the {\it{associated multigraded ring of $\fil$ with respect to $\fil({\bf e}).$}} For $\fil=\{\I^{\n}\}_{\n \in \ZZ^s}$, we set $\mathcal R(\fil)=\mathcal R(\I),$ $\mathcal R^\prime(\fil)=
\mathcal R^\prime(\I),$ $\ga=G(\I)$ and $\mathcal R(\I)_{++}=\R_{++}.$  A $\ZZ^s$-graded $\I$-filtration $\fil=\lbrace\fil(\n)\rbrace_{\n\in \ZZ^s}$ of ideals in $R$ is 
called an $\I$-{\it{admissible filtration}} if 
${{\fil}(\n)}={\fil}(\n^+)$ for all $\n\in\ZZ^s$ and $\mathcal{R}'(\fil)$
 is a finite $\mathcal{R}'(\I)$
 -module.
For an $\mm$-primary ideal $I$ in a Noetherian local ring $(R,\mm)$ of dimension $d,$ P. Samuel \cite{P1} showed that for all large $n,$ the {\it Hilbert-Samuel function of $I,$} $\displaystyle H_I(n)=\lm\lf{R}/{I^n}\rg$ coincides with a polynomial $$\displaystyle P_I(n)=e_0(I)\binom{n+d-1}{d}-e_1(I)
\binom{n+d-2}{d-1}+\cdots+(-1)^d e_d(I)$$ of degree $d,$ called the {\it Hilbert-Samuel polynomial of $I.$} The coefficients $e_i(I)$ for $i=0,1,\ldots,d$ are integers, known as Hilbert coefficients of $I.$ The coefficient $e(I):=e_0(I)$ \index{multiplicity of $I$! $e(I)$} is called the {\it multiplicity of $I.$} Let $s\geq 2,$ $\idl$ be $\mm$-primary ideals and $\fil$ be a $\ZZ^s$-graded $\I$-admissible filtrations of ideals in a Noetherian local ring $(R,\mm)$ of dimension $d.$ For the {\it Hilbert function of $\fil,$} $\ho(\n)=\lm\lf{R}/{{\fil}(\n)}\rg,$ D. Rees \cite{rees3} proved that there exists a polynomial of total degree $d$, called the {\it Hilbert polynomial of $\fil,$} of the form
$$\po(\n)=\displaystyle\sum\limits_{\substack{\alpha=({\alpha}_1,\ldots,{\alpha}_s)\in{\NN}^s \\ 
|\alpha|\leq d}}(-1)^{d-|{\alpha}|}{e_{\alpha}}(\fil)\binom{{n_1}+{{\alpha}_1}-1}{{\alpha}_1}
\cdots\binom{{n_s}+{{\alpha}_s}-1}{{\alpha}_s}$$ such that 
$\po(\n)=\ho(\n)$ for all large $\n.$ Here $e_{\alpha}(\fil)$ are integers called the Hilbert coefficients of $\fil.$ Rees \cite{rees3} showed that ${e_{\alpha}}(\fil)>0$ for $|\alpha|=d.$ These are called the {\it mixed multiplicities of $\fil.$} B. Teissier \cite{T} and P. B. Bhattacharya \cite{B} showed existence of this polynomial for the filtrations $\{{\I}^{\n}\}_{\n\in\ZZ^s}$ and $\{{\I}^{\n}\}_{\n\in\ZZ^2}$ respectively. 
\\Let $I$ be an $\mm$-primary ideal in a Noetherian local ring $(R,\mm).$ An integer $n(\mathcal I)$ is called the {\it postulation number of a $\ZZ$-graded $I$-admissible filtration $\mathcal I=\lbrace I_n\rbrace_{n\in\ZZ}$} if $P_{\mathcal I}(n)=H_{\mathcal I}(n)$ for all $n> n(\mathcal I)$ and $P_{\mathcal I}(n(\mathcal I))\neq H_{\mathcal I}(n(\mathcal I)).$  Let $\fil=\lbrace\fil(\n)\rbrace_{\n\in \ZZ^s}$ be an $\I$-admissible filtration of ideals in a Noetherian local ring $(R,\mm)$ and $s\geq 2.$ A vector $\n\in{\ZZ}^s$ is called a {\it postulation vector of $\fil$}\index{postulation! vector of $\fil$} if $\ho(\m)=\po(\m)$ for all $\m\geq\n.$ We use the notation $\mathcal{P}(\fil)$ to denote the set of all postulation vectors of $\fil.$
\\ D. Maclagan and G. Smith \cite{MS} have developed a multigraded variant of Castelnuovo-Mumford regularity in terms of vanishing of mutigraded components of local cohomology modules. Let $R=\bigoplus_{\n\in\NN^s}R_{\n}$ be a standard Noetherian $\NN^s$-graded ring defined over a local ring $(R_{\bf 0},\mm).$ Let $M=\bigoplus_{\n\in\ZZ^s}M_{\n}$ be a finitely generated $\ZZ^s$-graded $R$-module. Let $\m\in\ZZ^s.$ We say {\it{$M$ is $\m$-regular}} if for all $\n\in\NN^s,$
\ben{
\item $\left[H_{R_{++}}^0(M)\right]_{\n+\m+{\bf e_j}}=0$ for all $j=1,\dots,s,$

\item $\left[H_{R_{++}}^i(M)\right]_{\n+\m-{\bf p}}=0$ for all $i\geq 1$ and ${\bf p}\in\NN^s$ such that $|{\bf p}|=i-1.$
}\een
We set $\reg(M):=\{\m\in\ZZ^s\mid M\mbox{ is }\m\mbox{-regular}\}.$ Note that if $s=1$ then the minimum of $\reg(M)$ coincides with Castelnuovo-Mumford regularity of $M.$
 For $k\in\NN,$ we say {\it{$M$ is $\m$-regular from level $k$}} if $M$ is $\m$-regular when $k=0$ otherwise $[H_{R_{++}}^i(M)]_{\n+\m-{\bf p}}=0\mbox{ for all }i\geq k\mbox{ and }\n,{\bf p}\in\NN^s\mbox{ such that }|{\bf p}|=i-1.$ We set $\reg^k(M):=\{\m\in\ZZ^s\mid M\mbox{ is }\m\mbox{-regular from level }k \}.$
\\ A {\it reduction} of a $\ZZ$-graded $I$-admissible filtration $\mathcal I$ is an ideal $J\subseteq I_1$ such that $JI_{n} = I_{n+1}$ for all large $n.$ A {\it minimal reduction} of $\mathcal I$ is a reduction of ${\mathcal I}$ minimal with respect to inclusion. For a  minimal reduction $J$ of $\mathcal I,$ the integer ${r_J}(\mathcal I)=\min\lbrace m\in\NN\mid JI_n=I_{n+1}\mbox{ for }n\geq m\rbrace$ is called the {\it reduction number of $\mathcal I$ \wrt $J$} \index{reduction number! of $\mathcal I$ \wrt $J$} and the integer ${r}(\mathcal I)=\min\lbrace {r_J}(\mathcal I):J\mbox{ is a minimal reduction of }\mathcal I\rbrace$ is called the {\it reduction number of $\mathcal I.$} For a sequence of $\mm$-primary ideals $\I,$ Rees \cite{rees3} introduced the concept of complete reductions and joint reductions to study the multigraded Hilbert function $\hf(\n)=\lm(R/\I^{\n}).$ D. Kirby and Rees \cite{Kirby-Rees} generalized it further in the setting of multigraded rings and modules.
\\Let $\fil=\lbrace\fil(\n)\rbrace_{\n\in \ZZ^s}$ be a $\ZZ^s$-graded $\I$-admissible filtration of ideals in $(R,\mm)$ and $d\geq 1.$ A set of elements $\mathcal A_{\fil}=\lbrace x_{ij}\in I_i\mid j=1,\ldots,d;\ii\rbrace$ is called a {\it complete reduction of $\fil$} if for all $\n \geq \m$ for some $\m\in\NN^s,$ $(y_1,\ldots,y_d)\fil(\n)=\fil(\n+{\bf e})$ where $y_j=x_{1j}\cdots x_{sj}$ for all $j=1,\ldots,d.$ A vector $\rrr\in\NN^s$ is called a {\it complete reduction vector of $\fil$ with respect to $\mathcal{A}_{\fil}$} if for all $\n\geq\rrr,$ $(y_1,\ldots,y_d){\fil(\n)}=\fil(\n+{\bf e}).$ We use the notation $\R_{\mathcal A_{\fil}}(\fil)$ to denote the set of all complete reduction vectors of $\fil$ \wrt the complete reduction $\mathcal A_{\fil}.$
\\Let ${\bf q}=(q_1,\dots,q_s)\in \NN^s$  and $|{\bf q}|=d.$ A set of elements $\mathcal A_{{\bf q}}(\fil)=\lbrace a_{ij}\in I_{i}\mid j=1,\ldots,q_i;\ii\rbrace$ is called a {\it joint reduction of $\fil$ of type ${\bf q}$} if for all $\n \geq \m$ for some $\m\in\NN^s,$ $\sum\limits_{i=1}^{s}{\sum\limits_{j=1}^{q_i}{a_{ij}\fil(\n-{\bf e_i})}}=\fil(\n).$ A vector $\m$ is called a {\it joint reduction vector of $\fil$ with respect to the
joint reduction $\mathcal A_{{\bf q}}(\fil)$} if the above equality holds for all $\n\geq\m.$
\\In order to detect reduction vectors of multigraded admissible filtrations, we  use the theory of filter-regular sequences for multigraded modules. Let $M=\bigoplus_{\n\in\ZZ^s}M_{\n}$ be a finitely generated $\ZZ^s$-graded $R$-module where $R=\bigoplus_{\n\in\NN^s}R_{\n}$ is a standard Noetherian $\NN^s$-graded ring defined over a local ring $(R_{\bf 0},\mm).$ A homogeneous element $a\in R$ is called an {\it{$M$-filter-regular}} if $(0:_{M}a)_{\n}=0$ for all large $\n.$
Let $a_1,\dots,a_r\in R$ be homogeneous elements. Then $a_1,\dots,a_r$ is called an {\it{$M$-filter-regular sequence}} if $a_i$ is $M/(a_1,\dots,a_{i-1})M$-filter-regular for all $i=1,\dots,r.$
\\ The paper is organized in the following way. In section 2, we discuss some preliminary results required in the following section. In section 3, we show that $\reg(G(\fil))\subseteq\NN^s$ in which the following result plays a crucial role.
\bt
Let $(R,\mm)$ be a Noetherian local ring of dimension $d\geq 1$ with infinite residue field and $\idl$ be $\mm$-primary ideals in $R.$ Let $\mathcal{F}=\lbrace\fil(\n)\rbrace_{\n\in{\ZZ}^s}$ be an $\I$-admissible filtration of ideals in $R$ and $\m\in\reg(G(\fil)).$ Then for any ${\bf q}\in\NN^s$ with $|{\bf q}|=d$ and $A=\{i\mid q_i\geq 1\},$ there exists a joint reduction ${\mathcal A_{\bf q}}(\fil)=\lbrace a_{ij}\in I_i:j=1,\ldots,q_i;\ii\rbrace$ of $\fil$ of type ${\bf q}$ such that $\m+\sum\limits_{i\in A}{\bf e_i}$ is a joint reduction vector of $\fil$ \wrt ${\mathcal A_{\bf q}}(\fil),$  i.e. for all $\n\geq \m+\sum\limits_{i\in A}{\bf e_i},$ $$\sum\limits_{i=1}^{s}{\sum\limits_{j=1}^{q_i}{a_{ij}\fil(\n-{\bf e_i})}}=\fil(\n).$$ 
\et
We generalize the equality $\reg(\R(I))=\reg(G(I))$ for $\ZZ^s$-graded admissible filtrations $\fil.$  
\bt
Let $(R,\mm)$ be a Noetherian local ring of dimension $d\geq 1$ with infinite residue field and $\idl$ be $\mm$-primary ideals in $R.$ Let $\mathcal{F}=\lbrace\fil(\n)\rbrace_{\n\in{\ZZ}^s}$ be an $\I$-admissible filtration of ideals in $R.$ Then $\reg(\R(\fil))=\reg(G(\fil)).$
\et
We show that in a Noetherian local ring of dimension $d\geq 1,$ for any complete reduction $\mathcal A_{\fil}=\lbrace a_{ij}\in I_i:j=1,\ldots,d;\ii\rbrace$  of $\fil$ with $y_j=a_{1j}\cdots a_{sj}$ for all $j=1,\ldots,d,$ there exist $f_1,\ldots,f_d\in G(\I)_{\bf e}$ such that $f_1,\ldots,f_d$ is an $G(\fil)$-filter-regular sequence and $(y_1^*,\dots,y_d^*)=(f_1,\dots,f_d)G(\I)$ where $y_j^*=y_j+{\I}^{2{\bf e}}\in G(\I)_{\bf e}$ for all $j=1,\ldots,d$ and using this we prove the following result.
\bt
Let $(R,\mm)$ be a Noetherian local ring of dimension $d\geq 1$ with infinite residue field and $\idl$ be $\mm$-primary ideals in $R$ with $\grade(\id)\geq 1.$ Let $\mathcal{F}=\lbrace\fil(\n)\rbrace_{\n\in{\ZZ}^s}$ be an $\I$-admissible filtration of ideals in $R$ and $\mathcal A_{\fil}=\lbrace a_{ij}\in I_i:j=1,\ldots,d;\ii\rbrace$ be any complete reduction of $\fil.$ Let $H_{\R(\I)_{++}}^i(\R(\fil))_{\n}=0$ for all $0\leq i\leq d-1$ and $\n\geq {\bf 0}.$ Then 
\ben
\item[(1)] for all $\rrr\in{\R_{\mathcal A_{\fil}}(\fil)}$ with $\rrr\geq (d-1){\bf e},$ we have $\rrr-(d-1){\bf e}\in \PP(\fil),$
\item[(2)] if $s\geq 2,$ for all $\n\in\PP(\fil),$ we have $\n+(d-1){\bf e}\in \R_{\mathcal A_{\fil}}(\fil).$
\een 	In particular, for $s\geq 2,$ there exists a one-to-one correspondence 
$$f:{\displaystyle{\Large{\Large{\PP(\fil)\longleftrightarrow\lbrace{\rrr\in{\R_{\mathcal A_{\fil}}(\fil)}\mid\rrr\geq (d-1){\bf e}}\rbrace}}}}$$ defined by $f(\n)=\n+(d-1){\bf e}$ where $f^{-1}(\rrr)=\rrr-(d-1){\bf e}.$
\et
 {\bf Acknowledgement:} The author would like to thank Prof. Jugal K. Verma for his encouragement and insightful conversations.
\section{preliminaries}
Let $R=\bigoplus\limits_{\n\in\NN^s}R_{\n}$ be a standard Noetherian $\NN^s$-graded ring defined over an Artinian local ring $(R_{{\bf{0}}},\mm).$ Let $M=\bigoplus\limits_{\n\in\NN^s}M_{\n}$ be a finitely generated $\NN^s$-graded $R$-module. Let ${\Proj}^s(R)$ denote the set of all homogeneous prime ideals $P$ in $R$ such that $R_{++}\nsubseteq P.$ By \cite[Theorem 4.1]{hhrz}, there exists a numerical polynomial $P_M\in \QQ[X_1,\dots,X_s]$ of total degree $d=\dim Supp_{++}(M)$ of the form $$P_M(\n)=\displaystyle\sum\limits_{\substack{\alpha=({\alpha}_1,\ldots,{\alpha}_s)\in{\NN}^s \\ 
|\alpha|\leq d}}(-1)^{d-|{\alpha}|}{e_{\alpha}}(M)\binom{{n_1}+{{\alpha}_1}-1}{{\alpha}_1}
\cdots\binom{{n_s}+{{\alpha}_s}-1}{{\alpha}_s}$$ such that $e_{\alpha}(M)\in\ZZ,$ $P_M(\n)=\lm_{R_{{\bf{0}}}}\lf M_{\n}\rg$ for all large $\n$ and $e_{\alpha}(M)\geq 0$ for all $\alpha\in\NN^s$ such that $|\alpha|=d.$
\\Let $R^{\Delta}=\bigoplus\limits_{n\in\NN}R_{n{\bf e}}$ and $M^{\Delta}=\bigoplus\limits_{n\in\NN}M_{n{\bf e}}.$ Then $M^{\Delta}$ is an $\NN$-graded $R^{\Delta}$-module. Since for all large $\n,$ $$P_M(n{\bf e})=\lambda_{R_{\bf 0}}(M_{n{\bf e}})=\lambda_{R_{\bf 0}}((M^{\Delta})_{n}),$$ there exists a polynomial $P_{M^{\Delta}}(X)\in \QQ[X]$ such that $P_{M^{\Delta}}(n)=P_{M}(n{\bf e})$ for all $n$ and $\deg P_{M^{\Delta}}(X)=$ total degree of $P_{M}(X_1,\dots,X_s).$
\begin{lemma} $\deg P_{M^{\Delta}}(X)\geq 0$ if and only if $H_{R_{++}}^0(M)\neq M.$\end{lemma}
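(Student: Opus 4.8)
The plan is to convert the cohomological condition into an elementary statement about the Hilbert function and then feed it through the two identities already recorded above, namely $P_{M^{\Delta}}(n)=P_M(n{\bf e})$ for all $n$ and $\deg P_{M^{\Delta}}(X)=$ total degree of $P_M$. With the convention $\deg 0<0$, the inequality $\deg P_{M^{\Delta}}(X)\geq 0$ says exactly that $P_{M^{\Delta}}\neq 0$, and the two identities show $P_{M^{\Delta}}\neq 0$ precisely when $P_M\neq 0$, which holds precisely when $\lm_{R_{\bf 0}}(M_{\n})\neq 0$ for arbitrarily large $\n$ (a polynomial vanishing at every $\n\geq{\bf N}$ is identically zero). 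So it is enough to prove that $H_{R_{++}}^0(M)=M$ if and only if $M_{\n}=0$ for all large $\n$; negating both sides then gives the lemma.

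For that equivalence, the one structural fact I would exploit is that, since $R$ is standard $\NN^s$-graded over $R_{\bf 0}$, we have $R_{{\bf m}+{\bf e}}=R_{{\bf m}}R_{\bf e}$ for every ${\bf m}\geq{\bf 0}$ (express each graded piece as the $R_{\bf 0}$-span of products of the degree-${\bf e_i}$ pieces of a fixed algebra generating set), and hence $(R_{++})^k=\bigoplus_{\n\geq k{\bf e}}R_{\n}$ for all $k\geq 1$. Granting this: if $H_{R_{++}}^0(M)=M$ then, $M$ being finitely generated, a single power $(R_{++})^k$ annihilates $M$; writing $M=\sum_i Rx_i$ with $x_i$ homogeneous of degree ${\bf d}_i$, for every ${\bf m}$ with ${\bf m}-{\bf d}_i\geq k{\bf e}$ for all $i$ we get $M_{{\bf m}}=\sum_i R_{{\bf m}-{\bf d}_i}x_i\subseteq(R_{++})^kM=0$, so $M_{\n}=0$ for all large $\n$. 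Conversely, if $M_{\n}=0$ for all $\n\geq{\bf N}$, then for homogeneous $x\in M_{{\bf d}}$ and any $k$ with ${\bf d}+k{\bf e}\geq{\bf N}$ we have $(R_{++})^kx=\sum_{\n\geq k{\bf e}}R_{\n}x\subseteq\bigoplus_{{\bf m}\geq{\bf d}+k{\bf e}}M_{{\bf m}}=0$; applying this to each element of a finite homogeneous generating set and taking $k$ uniform shows $M=H_{R_{++}}^0(M)$.

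Once the identity $(R_{++})^k=\bigoplus_{\n\geq k{\bf e}}R_{\n}$ is available the remainder is bookkeeping, so I do not expect a genuine obstacle; the only points needing care are keeping ``large $\n$'' consistently in the paper's sense (all coordinates $\gg 0$) throughout, and using finite generation of $M$ to extract one exponent $k$ that works for every generator. If one would rather avoid the explicit Hilbert-function computation, an alternative is to argue on supports: $H_{R_{++}}^0(M)=M$ iff $(R_{++})^kM=0$ for some $k$, iff $R_{++}\subseteq\sqrt{\ann(M)}$, iff every prime in $\supp(M)$ contains $R_{++}$, iff $\supp_{++}(M)=\emptyset$, iff $\dim\supp_{++}(M)<0$; and by \cite[Theorem 4.1]{hhrz} this last dimension equals the total degree of $P_M$, which by the recorded identity equals $\deg P_{M^{\Delta}}(X)$. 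Still, I would use the direct computation above, since it engages the standing standardness hypothesis head-on.
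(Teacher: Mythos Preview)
Your proof is correct and follows essentially the same approach as the paper: both arguments reduce the lemma to the equivalence ``$H_{R_{++}}^0(M)=M$ if and only if $M_{\n}=0$ for all large $\n$,'' established via finite generation of $M$ and the standard grading of $R$. The only cosmetic difference is that the paper first proves the identity $M_{\n}=R_{\n-a{\bf e}}M_{a{\bf e}}$ for $\n\geq a{\bf e}$ and uses it (in the converse direction) to deduce torsion from the weaker diagonal vanishing $M_{t{\bf e}}=0$, whereas you use $(R_{++})^k=\bigoplus_{\n\geq k{\bf e}}R_{\n}$ and full large-degree vanishing coming from $P_M=0$; both are straightforward consequences of standardness.
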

\begin{proof}
Let $\{m_1,\ldots,m_r\}$ be a generating set of $M$ as $R$-module and $\deg{m_i}=d_i=(d_{i1},\ldots,d_{is})$ for all $i=1,\ldots,r.$ Let $a=\max\{|d_{ij}|: i=1,\ldots,r; j=1,\ldots,s\}.$ 
Then for all $\n\geq a{\bf e},$ \beqn R_{\n-a{\bf e}}M_{a{\bf e}}\subseteq M_{\n}&\subseteq& R_{\n-d_1}M_{d_1}+\cdots+R_{\n-d_r}M_{d_r}\\&=& R_{\n-a{\bf e}}R_{a{\bf e}-d_1}M_{d_1}+\cdots+R_{\n-a{\bf e}}R_{a{\bf e}-d_r}M_{d_r}\subseteq R_{\n-a{\bf e}}M_{a{\bf e}}.\eeqn
Let $\deg P_{M^{\Delta}}(X)\geq 0.$ Suppose $H_{R_{++}}^0(M)= M.$ Since $M$ is finitely generated, there exists an integer $k \geq 1$ such that $R_{++}^k M=0.$ Thus for all $\n\geq (a+k){\bf e},$ $M_{\n}=R_{\n-a{\bf e}}M_{a{\bf e}}=0.$ This contradicts that $\deg P_{M^{\Delta}}(X)\geq 0.$
\\Conversely, suppose $H_{R_{++}}^0(M)\neq M.$ If $\deg P_{M^{\Delta}}(X)=-1$ then $M_{n{\bf e}}=0$ for all large $n,$ say for all $n\geq k\geq 1.$ Let $m\in M_{\m}$ and $t\geq \max\{k,a\}.$ Then $$R_{++}^{t}m\subseteq M_{\m+t{\bf e}}=R_{\m+t{\bf e}-a{\bf e}}M_{a{\bf e}}\subseteq R_{\m}M_{t{\bf e}}=0$$ contradicts that $H_{R_{++}}^0(M)\neq M.$
\end{proof}
\begin{remark}
Let $(R,\mm)$ be a Noetherian local ring of dimension $d\geq 1$ with infinite residue field $k$ and $\idl$ be $\mm$-primary ideals in $R.$ Let $\mathcal{F}=\lbrace\fil(\n)\rbrace_{\n\in{\ZZ}^s}$ be an $\I$-admissible filtration of ideals in $R.$ Therefore by \cite[Proposition 2.5] {msv}, $G(\fil)$ is finitely generated $G(\I)$-module. Now for all large $\n,$ $$\lm\lf\frac{\fil(\n)}{\fil(\n+{\bf e})}\rg=\lm_{R}\lf\frac{R}{\fil(\n+{\bf e})}\rg-\lm_{R}\lf\frac{R}{\fil(\n)}\rg=\po(\n+{\bf e})-\po(\n)$$ is a numerical polynomial in $\QQ[X_1,\dots,X_s]$ of total degree $d-1$ for all large $\n$ and $e_{\beta}(G(\fil))> 0$ for all $\beta\in\NN^s$ where $|\beta|=d-1$ for $s\geq 2$ and $e_{0}(G(\fil))>0$ when $s=1.$ Since $\{\fil(n{\bf e})\}_{n\in\ZZ}$ is a $\ZZ$-graded $\{\I^{n{\bf e}}\}_{n\in\ZZ}$-admissible filtration, $\dim G(\fil)^{\Delta}=\deg P_{G(\fil)^{\Delta}}(X)+1=d.$
\end{remark}
\begin{lemma}\label{very very important}
Let $R=\bigoplus\limits_{\n\in\NN^s}R_{\n}$ be a standard Noetherian $\NN^s$-graded ring defined over an Artinian local ring $(R_{{\bf{0}}},\mm).$ Let $M=\bigoplus\limits_{\n\in\NN^s}M_{\n}$ be a finitely generated $\NN^s$-graded $R$-module and $e_{\alpha}(M)>0$ for all $\alpha\in\NN^s$ with $|\alpha|=\deg P_{M^{\Delta}}(X).$ Let $f\in R_{\bf e}$ be an $M$-filter-regular element. Then $e_{\beta}(M/fM)>0$ for all $\beta\in\NN^s$ with $|\beta|=\deg P_{(M/fM)^{\Delta}}(X)$ and $\deg P_{(M/fM)^{\Delta}}(X)=\deg P_{M^{\Delta}}(X)-1.$  
\end{lemma}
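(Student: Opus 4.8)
The plan is to compute the Hilbert polynomial of $M/fM$ from that of $M$ via the short exact sequence of $\NN^s$-graded $R$-modules
\[
0 \lra (0:_M f)(-{\bf e}) \lra M(-{\bf e}) \overset{f}{\lra} M \lra M/fM \lra 0,
\]
in which the third arrow is multiplication by $f \in R_{\bf e}$. Since $f$ is $M$-filter-regular, $(0:_M f)_{\n} = 0$ for all large $\n$, so the Hilbert function of $(0:_M f)$ is eventually zero and hence $P_{(0:_M f)} = 0$. Comparing Hilbert functions degree by degree for large $\n$ gives $\lm_{R_{\bf 0}}((M/fM)_{\n}) = \lm_{R_{\bf 0}}(M_{\n}) - \lm_{R_{\bf 0}}(M_{\n - {\bf e}})$, and therefore the identity of numerical polynomials
\[
P_{M/fM}(\n) = P_M(\n) - P_M(\n - {\bf e}).
\]

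Write $d := \deg P_{M^{\Delta}}(X)$, which equals the total degree of $P_M$ (as recalled above); we may assume $d \geq 1$, the case $d = 0$ being immediate since then $P_{M/fM} = 0$ and there is no $\beta$ with $|\beta| = d-1$. Let $F_d(\n) = \sum_{|\alpha| = d} e_{\alpha}(M)\, n_1^{\alpha_1}\cdots n_s^{\alpha_s}/(\alpha_1!\cdots\alpha_s!)$ denote the leading form of $P_M$. A Taylor expansion of $F_d(\n-{\bf e})$ shows that the homogeneous component of degree $d-1$ of $P_M(\n) - P_M(\n-{\bf e})$ equals $\sum_{j=1}^{s}\partial F_d/\partial n_j$; reindexing via $\alpha = \beta + {\bf e_j}$, this is
\[
\sum_{\substack{\beta \in \NN^s \\ |\beta| = d-1}} \lf\, \sum_{j=1}^{s} e_{\beta + {\bf e_j}}(M) \rg \frac{n_1^{\beta_1}\cdots n_s^{\beta_s}}{\beta_1!\cdots\beta_s!}.
\]
Matching this against the leading form of $P_{M/fM}$ and using linear independence of the monomials $n_1^{\beta_1}\cdots n_s^{\beta_s}$, I get $e_{\beta}(M/fM) = \sum_{j=1}^{s} e_{\beta+{\bf e_j}}(M)$ for every $\beta \in \NN^s$ with $|\beta| = d-1$.

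Now the hypothesis asserts $e_{\gamma}(M) > 0$ for all $\gamma$ with $|\gamma| = d$ (only $e_{\gamma}(M) \geq 0$ is automatic), so each summand $e_{\beta+{\bf e_j}}(M)$ is strictly positive and hence $e_{\beta}(M/fM) > 0$ for all $\beta$ with $|\beta| = d - 1$. In particular the degree-$(d-1)$ part of $P_{M/fM}$ is nonzero, so $P_{M/fM}$ has total degree exactly $d - 1$; equivalently $\deg P_{(M/fM)^{\Delta}}(X) = d - 1 = \deg P_{M^{\Delta}}(X) - 1$, which also matches the first-difference identity $P_{(M/fM)^{\Delta}}(n) = P_{M^{\Delta}}(n) - P_{M^{\Delta}}(n-1)$ obtained by restricting the polynomial identity above to $\n = n{\bf e}$. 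Since the $\beta$ with $|\beta| = d-1$ are precisely those with $|\beta| = \deg P_{(M/fM)^{\Delta}}(X)$, both assertions of the lemma follow.

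The one point I expect to require care is ruling out a drop of more than one in the total degree: this is exactly where strict positivity $e_{\gamma}(M) > 0$ (rather than merely $\geq 0$) is used, since it prevents cancellation in the leading form $\sum_{j} e_{\beta+{\bf e_j}}(M)$. The degree statement alone could instead be obtained quickly from the identity $(M/fM)^{\Delta} = M^{\Delta}/fM^{\Delta}$ together with the fact that $f$ is an $M^{\Delta}$-filter-regular element of degree one in the standard $\NN$-graded ring $R^{\Delta}$, whence $\dim (M/fM)^{\Delta} = \dim M^{\Delta} - 1$; but the positivity of the top Hilbert coefficients of $M/fM$ genuinely needs the multigraded coefficient identity above.
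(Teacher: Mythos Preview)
Your proof is correct and follows essentially the same approach as the paper: both use the exact sequence induced by multiplication by $f$ to obtain $P_{M/fM}(\n)=P_M(\n)-P_M(\n-{\bf e})$, and then read off the identity $e_\beta(M/fM)=\sum_{j=1}^s e_{\beta+{\bf e_j}}(M)$. Your write-up is more explicit than the paper's about why the total degree drops by exactly one (you argue that the degree-$(d-1)$ form is nonzero, whereas the paper simply asserts the coefficient identity for $|\beta|=\deg P_{(M/fM)^{\Delta}}(X)$), but the underlying argument is the same.
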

\bpf
Consider the exact sequence of $R$-modules $$0\longrightarrow (0:_{M}f )_{\n-{\bf {e}}}\longrightarrow M_{\n-{\bf {e}}}\overset{f}\longrightarrow M_{\n}\longrightarrow \frac{M_{\n}}{fM_{\n-{\bf {e}}}}\longrightarrow 0.$$ Since $f$ is $M$-filter-regular, for all large $\n,$ we get $$\lm_{R_{{\bf{0}}}}\lf\frac{M_{\n}}{fM_{\n-{\bf {e}}}}\rg=\lm_{R_{{\bf{0}}}}\lf M_{\n}\rg-\lm_{R_{{\bf{0}}}}\lf M_{\n-{\bf {e}}}\rg$$ and hence for all $\n\in\ZZ^s,$ $P_{{M}/{fM}}(\n)=P_M(\n)-P_M(\n-{\bf {e}}).$ Therefore for all $\beta\in\NN^s$ with $|\beta|=\deg P_{(M/fM)^{\Delta}}(X),$ $e_{\beta}(M/fM)=e_{\alpha_1}(M)+\ldots+e_{\alpha_s}(M)>0$ where ${\alpha_i}=\beta+{\bf e_i}$ for all $\ii.$
\eepf
\bl\label{re21}
Let $R=\bigoplus\limits_{\n\in\NN^s}R_{\n}$ be a $\NN^s$-graded Noetherian ring defined over a local ring $(R_{\bf 0},\mm)$ and $M=\bigoplus\limits_{\n\in\NN^s}M_{\n}$ be a finitely generated $\NN^s$-graded $R$-module. Let $a\in R_{\m}$ be an $M$-filter-regular where $\m\neq{\bf 0}.$ Then for all $\n\in\ZZ^s$ and $i\geq 0,$ the following sequence is exact
\beqn \displaystyle [H^{i}_{R_{++}}(M)]_{\n}\longrightarrow \left[H^{i}_{R_{++}}\lf\frac{M}{(aM)}\rg\right]_{\n}\longrightarrow [H^{i+1}_{R_{++}}(M)]_{\n-\m}.\eeqn 
\el
\bpf
Consider the following short exact sequence of $R$-modules $$0\longrightarrow (0:_{M}a)\longrightarrow M\longrightarrow \frac{M}{(0:_{M}a)}\longrightarrow 0.$$ Since $a$ is $M$-filter-regular, $(0:_{M}a)$ is $R_{++}$-torsion. Hence $$H^{i}_{R_{++}}(M)\simeq H^{i}_{R_{++}}\lf\frac{M}{(0:_{M}a)}\rg\mbox{ for all } i\geq 1.$$ Therefore the short exact sequence of $R$-modules $$0\longrightarrow \frac{M}{(0:_{M}a)}(-\m)\overset{a}\longrightarrow M\longrightarrow \frac{M}{aM}\longrightarrow 0$$ gives the desired exact sequence. 
\eepf
\bl\label{need}
Let $R=\bigoplus\limits_{\n\in\NN^s}R_{\n}$ be a $\NN^s$-graded Noetherian ring defined over a local ring $(R_{{\bf 0}},\mm)$ and $M=\bigoplus\limits_{\n\in\NN^s}M_{\n}$ be a finitely generated $\NN^s$-graded $R$-module. Let $H^{i}_{R_{++}}(M)_{\n}=0$ for all $i\geq 0$ and $\n\geq\m.$ Let $a_1,\dots,a_t\in R_{{\bf {e}}}$ be an $M$-filter-regular sequence. Then $H_{R_{++}}^i({M}/{(a_1,\dots,a_t)M})_{\n}=0$ for all $i\geq 0$ and $\n\geq \m+t{\bf e}.$
\el
\bpf
We use induction on $t.$ Let $t=1.$ By Lemma \ref{re21}, for all $i\geq 0,$ we get the exact sequence
 \beqn\label{re1}[H^{i}_{R_{++}}(M)]_{\n}\longrightarrow \left[H^{i}_{R_{++}}\lf\frac{M}{a_1M}\rg\right]_{\n}\longrightarrow [H^{i+1}_{R_{++}}(M)]_{\n-{\bf {e}}}.\eeqn Since for all $i\geq 0$ and $\n\geq \m,$ $[H^{i}_{R_{++}}(R)]_{\n}=0,$ we get $\displaystyle\left[H^{i}_{R_{++}}\lf\frac{M}{a_1M}\rg\right]_{\n}=0$ for all $i\geq 0$ and $\n\geq \m+{\bf {e}}.$ Hence the result is true for $l=1.$
\\ Suppose $t\geq 2$ and the result is true upto $t-1.$ Let $M'=M/(a_1,\dots,a_{l-1})M.$ Then $$\displaystyle [H^{i}_{R_{++}}(M')]_{\n}=0\mbox{ for all }i\geq 0\mbox{ and }\n\geq \m+(t-1){\bf {e}}.$$ Since $a_t$ is $M'$-filter-regular, using $t=1$ case, we get $\displaystyle \left[H^{i}_{R_{++}}\lf\frac{M'}{a_tM'}\rg\right]_{\n}=0$ for all $i\geq 0$ and $\n\geq \m+t{\bf {e}}.$   
\eepf
\bl\label{all}
Let $(R,\mm)$ be a Noetherian local ring of dimension $d\geq 1$ with infinite residue field and $\idl$ be $\mm$-primary ideals in $R.$ Let $\mathcal{F}=\lbrace\fil(\n)\rbrace_{\n\in{\ZZ}^s}$ be an $\I$-admissible filtration of ideals in $R.$ Let $y_1,\ldots,y_t\in\I^{\bf e}$ and $\m\in\NN^s$ such that $\fil(\n)=(y_1,\ldots,y_t)\fil(\n-{\bf e})+\fil(\n+{\bf e})$ for all $\n\geq\m.$ Then $\fil(\n)=(y_1,\ldots,y_t)\fil(\n-{\bf e})$ for all $\n\geq\m.$
\el
\bpf
Since $\fil$ is an $\I$-admissible filtration, by \cite[Proposition 2.4]{msv}, for each $\ii,$ there exists an integer $r_i$ such that for all $\n\in\ZZ^s,$ where $n_i\geq r_i,$ $\fil(\n+{\bf {e_i}})=I_i\fil(\n).$ Let $r=\max\{r_1,\ldots,r_s\}.$ For all $\n\geq \m,$ we get \beqn \fil(\n)&=&(y_1,\ldots,y_t)\fil(\n-{\bf e})+\fil(\n+{\bf e})\\&\subseteq& (y_1,\ldots,y_t)\fil(\n-{\bf e})+(y_1,\ldots,y_t)\fil(\n)+\fil(\n+2{\bf e})\\&\subseteq& \vdots\\&\subseteq& (y_1,\ldots,y_t)\fil(\n-{\bf e})+\ldots+(y_1,\ldots,y_t)\fil(\n+r{\bf e})+\fil(\n+(r+1){\bf e})\\&\subseteq& (y_1,\ldots,y_t)\fil(\n-{\bf e})+\I^{\bf e}\fil(\n+r{\bf e})\subseteq (y_1,\ldots,y_t)\fil(\n-{\bf e})+\mm\fil(\n).\eeqn Thus by Nakayama's Lemma, we get the required result.
\eepf
\bp\label{reghelp}
Let $R=\bigoplus\limits_{\n\in\NN^s}R_{\n}$ be a standard $\NN^s$-graded Noetherian ring defined over a local ring $(R_{{\bf 0}},\mm)$ and $M=\bigoplus\limits_{\n\in\ZZ^s}M_{\n}$ be a finitely generated $\ZZ^s$-graded $R$-module. Let $a_1,\ldots,a_l\in R_{e_j}$ is a $M$ filter-regular sequence. Suppose  for all $\n\in\NN^s,$ $\displaystyle\left[H_{R_{++}}^i(M)\right]_{\m+\n-{\bf p}}=0$ for all $i\geq 1$ and ${\bf p}\in\NN^s$ such that $|{\bf p}|=i-1.$ Then for all $\n\in\NN^s,$ 
$\displaystyle\left[H_{R_{++}}^i({M}/{(a_1,\ldots,a_l)M})\right]_{\m+\n-{\bf p}}=0$ for all $i\geq 1$ and ${\bf p}\in\NN^s$ such that $|{\bf p}|=i-1.$
\ep
\bpf
We prove it using induction on $l.$ Let $l=1.$ By Lemma \ref{re21}, for all $i\geq 1$ and $\n\in\ZZ^s,$ we have the following exact sequence  \beqn \displaystyle [H^{i}_{R_{++}}(M)]_{\n}\longrightarrow \left[H^{i}_{R_{++}}\lf\frac{M}{(a_1M)}\rg\right]_{\n}\longrightarrow [H^{i+1}_{R_{++}}(M)]_{\n-{\bf e_j}}.\eeqn Let $i\geq 1$ and ${\bf p}\in\NN^s$ such that $|{\bf p}|=i-1.$  Since $|{\bf e_j}+{\bf p}|=i,$ we have $\left[H_{R_{++}}^{i+1}(M)\right]_{\m+\n-{\bf e_j}-{\bf p}}=0$ for all $\n\in\NN^s.$ Thus we get $\displaystyle\left[H_{R_{++}}^i({M}/{a_1M})\right]_{\m+\n-{\bf p}}=0$ for all $i\geq 1$ and ${\bf p}\in\NN^s$ such that $|{\bf p}|=i-1.$ Let $l\geq 2,$ $N=M/a_1,\ldots,a_{l-1})M$ and $\displaystyle\left[H_{R_{++}}^i(N)\right]_{\m+\n-{\bf p}}=0$ for all $i\geq 1$ and ${\bf p}\in\NN^s$ such that $|{\bf p}|=i-1.$ Since $a_l$ is $N$-filter-regular element, the result follows from $l=1$ case.
\eepf
\section{multigraded regularity, reduction vectors and postulation vectors}
In this section we prove that for a $\ZZ^s$-graded $\I$-admissible filtrations of ideals, $$\reg(\R(\fil))=\reg(G(\fil))\subseteq \NN^s,$$ which generalizes a result of Trung \cite[Corollary 3.3]{trung}, Ooishi \cite[Lemma 4.8]{aooishi}, Johnson and Ulrich \cite[Proposition 4.1]{JU}. We show that if $\mathcal A_{\fil}=\lbrace a_{ij}\in I_i:j=1,\ldots,d;\ii\rbrace$ is  any complete reduction of a $\ZZ^s$-graded $\I$-admissible filtration of ideals in $R$ then there exist $f_1,\ldots,f_d\in \mathfrak a_{\bf e}$ such that $f_1,\ldots,f_d$ is an $G(\fil)$-filter-regular sequence and $(y_1^*,\dots,y_d^*)=(f_1,\dots,f_d)G(\I).$ We provide relations among complete reduction vectors and $\reg(G(\fil)).$ We conclude this section by establishing a relationship between reduction vectors \wrt complete reductions and postulation vectors of admissible multigraded filtrations of ideals in \CM local rings of dimension $d\geq 1$ under some cohomological conditions. This generalizes results of Sally \cite[Proposition 3]{sa}, Ooishi \cite[Proposition 4.10]{aooishi}, and Marley \cite[Corollary 2.2]{marley2}, \cite[Theorem 2]{marley}, \cite[Corollary 3.8]{marleythesis}.
\bt\label{important}
Let $(R,\mm)$ be a Noetherian local ring of dimension $d\geq 1$ with infinite residue field and $\idl$ be $\mm$-primary ideals in $R.$ Let $\mathcal{F}=\lbrace\fil(\n)\rbrace_{\n\in{\ZZ}^s}$ be an $\I$-admissible filtration of ideals in $R$ and $\m\in\reg(G(\fil)).$ Then for any ${\bf q}\in\NN^s$ with $|{\bf q}|=d$ and $A=\{i\mid q_i\geq 1\},$ there exists a joint reduction ${\mathcal A_{\bf q}}(\fil)=\lbrace a_{ij}\in I_i:j=1,\ldots,q_i;\ii\rbrace$ of $\fil$ of type ${\bf q}$ such that $\m+\sum\limits_{i\in A}{\bf e_i}$ is a joint reduction vector of $\fil$ \wrt ${\mathcal A_{\bf q}}(\fil),$  i.e. for all $\n\geq \m+\sum\limits_{i\in A}{\bf e_i},$ $$\sum\limits_{i=1}^{s}{\sum\limits_{j=1}^{q_i}{a_{ij}\fil(\n-{\bf e_i})}}=\fil(\n).$$ 
\et
\bpf
By \cite[Theorem 2.3]{powerproduct}, there exists a joint reduction ${\mathcal A_{\bf q}}(\fil)=\lbrace a_{ij}\in I_i:j=1,\ldots,q_i;\ii\rbrace$ of $\fil$ of type ${\bf q}$ such that 
$a^*_{11},\dots,a^*_{1q_1},\dots,a^*_{s1},\dots,a^*_{sq_s}$ is a $G(\fil)$-filter-regular sequence where $a^*_{ij}$ is the image of $a_{ij}$ in $G(\I)_{\bf {e_i}}$ for all $j=1,\ldots,q_i$ and $\ii.$
Since $\m\in\reg(G(\fil)),$ using Lemma \ref{re21} and Proposition \ref{reghelp} for $q_i$ times for each $i,$ we get $$\displaystyle \left[H^{0}_{G(\I)_{++}}\lf\frac{G(\fil)}{(a^*_{11},\dots,a^*_{1q_1},\dots,a^*_{s1},\dots,a^*_{sq_s})G(\fil)}\rg\right]_{\n}=0$$ for all $\n\geq \m+\sum\limits_{i\in A}{\bf e_i}.$ Now $\fil$ is an $\I$-admissible filtration. Hence by \cite[Proposition 2.4]{msv}, for each $\ii,$ there exists an integer $r_i$ such that for all $\n\in\ZZ^s,$ where $n_i\geq r_i,$ $\fil(\n+{\bf {e_i}})=I_i\fil(\n).$ Let $r=\max\{r_1,\ldots,r_s\}.$ Since ${G(\fil)}/{(a^*_{11},\dots,a^*_{1q_1},\dots,a^*_{s1},\dots,a^*_{sq_s})G(\fil)}$ is $G(\I)_{++}$-torsion, we get that for all $\n\geq \m+\sum\limits_{i\in A}{\bf e_i},$ \beqn
\fil(\n)&=&\sum\limits_{i=1}^{s}{\sum\limits_{j=1}^{q_i}{a_{ij}\fil(\n-{\bf e_i})}}+\fil(\n+{\bf e})\subseteq  \sum\limits_{i=1}^{s}{\sum\limits_{j=1}^{q_i}{a_{ij}\fil(\n-{\bf e_i})}}+\fil(\n+2{\bf e})\\&\subseteq & \vdots \\&\subseteq & \sum\limits_{i=1}^{s}{\sum\limits_{j=1}^{q_i}{a_{ij}\fil(\n-{\bf e_i})}}+\fil(\n+(r+1){\bf e})\subseteq  \sum\limits_{i=1}^{s}{\sum\limits_{j=1}^{q_i}{a_{ij}\fil(\n-{\bf e_i})}}+{\I}^{\bf e}\fil(\n).
\eeqn Therefore by Nakayama's Lemma, we get the required result.
\eepf
\bt\label{p4u}
Let $(R,\mm)$ be a Noetherian local ring of dimension $d\geq 1$ with infinite residue field and $\idl$ be $\mm$-primary ideals in $R.$ Let $\mathcal{F}=\lbrace\fil(\n)\rbrace_{\n\in{\ZZ}^s}$ be an $\I$-admissible filtration of ideals in $R.$ Then $\reg(G(\fil)\subseteq \NN^s.$
\et
\bpf
Suppose there exists $\m\in\ZZ^s\setminus\NN^s$ and $\m\in\reg(G(\fil)).$ Then there exists atleast one $i\in\{1,\ldots,s\}$ such that $m_i<0.$ \wlg assume $i=1.$ By Theorem \ref{important}, there exists a joint reduction $\lbrace a_{j}\in I_1:j=1,\ldots,d\rbrace$ of $\fil$ whose image in $G(\I)_{\bf e_1}$ is a $G(\fil)$-filter-regular sequence. Thus by Theorem \ref{important}, for all $\n\in\NN^s,$  we have $$\displaystyle{\left(\frac{G(\fil)}{(a_1^*,\ldots,a_d^*)G(\fil)}\right)_{\m+\n+{\bf e_1}}=0}.$$  Now $\fil$ is an $\I$-admissible filtration. Hence by \cite[Proposition 2.4]{msv}, for each $\ii,$ there exists an integer $r_i$ such that for all $\n\in\ZZ^s,$ where $n_i\geq r_i,$ $\fil(\n+{\bf {e_i}})=I_i\fil(\n).$ Let $r=\max\{r_1,\ldots,r_s\}.$ Define ${\bf k}=(k_1,\ldots,k_s)$ such that  
\[ k_j = \left\{
  \begin{array}{l l}
   {0}  & \quad \text{if $m_j\leq 0$ }\\
    {m_j} & \quad \text{$m_j\geq 0.$ }
  \end{array} \right. \] 
 Note that ${\bf k}\geq \m+{\bf e_1}.$ Thus for all $\n\in\NN^s,$ we have $\displaystyle\lf\frac{G(\fil)}{(a_1^*,\ldots,a_d^*)G(\fil)}\rg_{{\bf k}+\n}=0.$ Since $k_1= 0,$ we have $\fil({\bf k})=\fil({\bf k}+{\bf e})$ and 
 \beqn
 \fil({\bf k}+{\bf e})&= &{\sum_{j=1}^{d}{a_{j}\fil({\bf k}+{\bf e}-{\bf e_1})}}+\fil({\bf k}+2{\bf e})\subseteq  \sum_{j=1}^{d}{a_{j}\fil({\bf k}+{\bf e}-{\bf e_1})}+\sum_{j=1}^{d}{a_{j}\fil({\bf k}+2{\bf e}-{\bf e_1})}+\fil({\bf k}+3{\bf e})\\&\subseteq & \\&\vdots& \\&\subseteq & \sum_{j=1}^{d}{a_{j}\fil({\bf k}+{\bf e}-{\bf e_1})}+\fil({\bf k}+(r+1){\bf e})\subseteq\sum_{j=1}^{d}{a_{j}\fil({\bf k}+{\bf e}-{\bf e_1})}+{\I}^{\bf e}\fil({\bf k})\subseteq\mm\fil({\bf k}).
 \eeqn
Thus by Nakayama's Lemma, $\fil({\bf k})=0$ which is a contradiction. Hence $\reg(G(\fil))\subseteq\NN^s.$
\eepf
We recall the following result.
\bp\label{result 2}{\rm{\cite[Proposition 4.2]{msv}}}
Let $S^\prime$ be a $\ZZ^s$-graded ring and 
$S=\bigoplus\limits_{\n \in \NN^s} S^\prime_{\n}.$ 
Then 
$H^i_{S_{++}}(S^\prime) \cong H^i_{S_{++}}(S)$ for all $i >1 $ and the sequence 
$$0 \longrightarrow H^0_{S_{++}}(S) \longrightarrow H^0_{S_{++}}(S^\prime) \longrightarrow 
\frac{S^\prime}{S} \longrightarrow H^1_{S_{++}}(S) \longrightarrow H^1_{S_{++}}(S^\prime)
\longrightarrow 0 $$
is exact.
\ep
\bt\label{regpo}
Let $(R,\mm)$ be a Noetherian local ring of dimension $d\geq 1$ with infinite residue field and $\idl$ be $\mm$-primary ideals in $R.$ Let $\mathcal{F}=\lbrace\fil(\n)\rbrace_{\n\in{\ZZ}^s}$ be an $\I$-admissible filtration of ideals in $R.$ Then $\reg(\R(\fil))=\reg(G(\fil).$
\et
\bpf
First we prove that $\reg(G(\fil))\subseteq \reg(\R(\fil)).$ Consider the short exact sequence of $\R(\I)$-modules $$0\longrightarrow \mathcal{R}'(\fil)({\bf e})\longrightarrow \mathcal{R}'(\fil)\longrightarrow G'(\fil)\longrightarrow 0$$ which induces the long exact sequence of $R$-modules
$$\cdots\longrightarrow [H_{{\R}_{++}}^i(\mathcal{R}'(\fil))]_{\n+{\bf e}} \longrightarrow [H_{{\R}_{++}}^i(\mathcal{R}'(\fil))]_{\n} 
\longrightarrow [H_{{\R}_{++}}^i(G'(\fil))]_{\n}
 \longrightarrow  \cdots.$$ By Proposition \ref{result 2}, $H_{{\R}_{++}}^i(\mathcal{R}'(\fil))=H_{{\R}_{++}}^i(\mathcal{R}(\fil)),$ $H_{{\R}_{++}}^i(\mathcal{G}'(\fil))=H_{{\R}_{++}}^i(\mathcal{G}(\fil))$ for all $i\geq 2$ and $[H_{{\R}_{++}}^i(\mathcal{R}'(\fil))]_{\n}=[H_{{\R}_{++}}^i(\mathcal{R}(\fil))]_{\n},$ $[H_{{\R}_{++}}^i(\mathcal{G}'(\fil))]_{\n}=[H_{{\R}_{++}}^i(\mathcal{G}(\fil))]_{\n}$ for all $i=0,1$ and $\n\in\NN^s.$ Since by Theorem \ref{p4u}, $\reg(G(\fil))\subseteq\NN^s,$ we get $\reg(G(\fil))\subseteq \reg(\R(\fil)).$
 \\Conversely, suppose $\m\in\reg(\R(\fil)).$ Let $N$ be the $\R(\I)$-submodule of $\R(\fil)$ such that \[ N_{\bf k} = \left\{
  \begin{array}{l l}
   {0}  & \quad \text{if $k_j\leq 1$ for atleast one $j\in\{1,\ldots,s\}$ }\\
    \R(\fil)_{\bf k} & \quad \text{${\bf k}\geq{\bf e}.$ }
  \end{array} \right. \]
 Consider the short exact sequence of $\R(\I)$-modules $$0\longrightarrow N \longrightarrow \R(\fil)\longrightarrow C=\R(\fil)/N\longrightarrow 0$$ which induces a long exact sequence of modules $$\cdots\longrightarrow [H_{{\R}_{++}}^i(N)]_{\n} \longrightarrow [H_{{\R}_{++}}^i(\mathcal{R}(\fil))]_{\n} 
\longrightarrow [H_{{\R}_{++}}^i(C)]_{\n}
 \longrightarrow\cdots.$$ Since $C$ is $\R_{++}$-torsion, we have $H_{{\R}_{++}}^i(\mathcal{R}(\fil))=H_{{\R}_{++}}^i(N)$ for all $i\geq 2$ and \beqnn\label{tech} 0\longrightarrow [H_{{\R}_{++}}^0(N)] \longrightarrow [H_{{\R}_{++}}^0(\mathcal{R}(\fil))] 
\longrightarrow C \longrightarrow [H_{{\R}_{++}}^1(N)] \longrightarrow [H_{{\R}_{++}}^1(\mathcal{R}(\fil))] 
\longrightarrow 0 \eeqnn is exact. Now consider the short exact sequence of $\R(\I)$-modules $$0\longrightarrow N({\bf e}) \longrightarrow \R(\fil)\longrightarrow G(\fil)\longrightarrow 0$$ which induces a long exact sequence of modules $$\cdots\longrightarrow [H_{{\R}_{++}}^i(N)]_{\n+{\bf e}} \longrightarrow [H_{{\R}_{++}}^i(\mathcal{R}(\fil))]_{\n} 
\longrightarrow [H_{{\R}_{++}}^i(G(\fil))]_{\n}\longrightarrow  [H_{{\R}_{++}}^{i+1}(N)]_{\n+{\bf e}}
 \longrightarrow\cdots.$$ For all $i\geq 1,$ $$[H_{{\R}_{++}}^i(\mathcal{R}(\fil))]_{\n} 
\longrightarrow [H_{{\R}_{++}}^i(G(\fil))]_{\n}\longrightarrow  [H_{{\R}_{++}}^{i+1}(\R(\fil))]_{\n+{\bf e}}
$$ is exact. Therefore for all $i\geq 1,$ $[H_{{\R}_{++}}^i(G(\fil))]_{\m+\n-{\bf p}}=0$ for all $\n, {\bf p}\in\NN^s$ with $|{\bf p}|=i-1.$
\\Consider the exact sequence \beqnn\label{ki}[H_{{\R}_{++}}^0(\mathcal{R}(\fil))]_{\n} 
\longrightarrow [H_{{\R}_{++}}^0(G(\fil))]_{\n}\longrightarrow  [H_{{\R}_{++}}^{1}(N)]_{\n+{\bf e}}
 \longrightarrow [H_{{\R}_{++}}^1(\mathcal{R}(\fil))]_{\n}.\eeqnn Fix $j\in\{1,\ldots,s\}.$ Then $[H_{{\R}_{++}}^0(\mathcal{R}(\fil))]_{\m+\n+{\bf e_j}}=0=[H_{{\R}_{++}}^1(\mathcal{R}(\fil))]_{\m+\n}$ for all $\n\in\NN^s.$ Fix $\n\in\NN^s.$\\
 \\ {\bf Case 1:} Let $\m+\n+{\bf e_j}\in\ZZ^s\setminus\NN^s.$ Since $H_{{\R}_{++}}^0(G(\fil))\subseteq G(\fil),$ we have $[H_{{\R}_{++}}^0(G(\fil))]_{\m+\n+{\bf e_j}}=0.$\\
 \\ {\bf Case 2:} Let $\m+\n+{\bf e_j}\in\NN^s.$ Then $\m+\n+{\bf e}+{\bf e_j}\geq {\bf e}.$ Therefore from the exact sequence (\ref{tech}), we have $$[H_{{\R}_{++}}^{1}(N)]_{\m+\n+{\bf e}+{\bf e_j}}=C_{\m+\n+{\bf e}+{\bf e_j}}=0.$$ Hence from the exact sequence (\ref{ki}), $[H_{{\R}_{++}}^0(G(\fil))]_{\m+\n+{\bf e_j}}=0.$ Thus $\reg(\R(\fil))\subseteq \reg(G(\fil)).$
\eepf
\bt\label{complete reduction filter-regular}
Let $(R,\mm)$ be a Noetherian local ring of dimension $d\geq 1$ with infinite residue field $k$ and $\idl$ be $\mm$-primary ideals in $R$ with $\grade(\id)\geq 1.$ Let $\mathcal{F}=\lbrace\fil(\n)\rbrace_{\n\in{\ZZ}^s}$ be an $\I$-admissible filtration of ideals in $R.$ Let $\mathcal A_{\fil}=\lbrace a_{ij}\in I_i:j=1,\ldots,d;\ii\rbrace$ be a \crr of $\fil$ and $y_j=a_{1j}\cdots a_{sj}$ for all $j=1,\ldots,d.$ Let $\mathfrak a$ denote the ideal $(y_1^*,\ldots,y_d^*)$ in $G(\I)$ where $y_j^*=y_j+{\I}^{2{\bf e}}$ for all $j=1,\ldots,d.$ Then there exist $f_1,\ldots,f_d\in \mathfrak a_{\bf e}$ such that $f_1,\ldots,f_d$ is an $G(\fil)$-filter-regular sequence and $(y_1^*,\dots,y_d^*)=(f_1,\dots,f_d)G(\I).$
\et
\bpf Since $\fil$ is $\I$-admissible filtration of ideals in $R,$ $G(\fil)$ is finitely generated $G(\I)$-module. First we prove that if $\mathcal A_{\fil}$ is a \crr of $\fil$ then ${\mathfrak a}_{\n}=G(\I)_{\n}$ for all large $\n.$ For a complete reduction  $\mathcal A_{\fil}$ of $\fil,$ we have $(y_1,\ldots,y_d)\fil(\n)=\fil(\n+{\bf e})$ for all large $\n.$ Since $\fil$ is an $\I$-admissible filtration, by \cite[Proposition 2.4]{msv}, for each $\ii,$ there exists an integer $r_i$ such that 
 for all $\n\in\ZZ^s,$ where $n_i\geq r_i,$ $\fil(\n+{\bf {e_i}})=I_i\fil(\n).$ Thus for all large $\n,$ we get $$(y_1,\ldots,y_d)\fil(\n)=\I^{\bf e}\fil(\n).$$ Therefore by \cite[Lemma 1.5]{rees3}, we get $(y_1,\ldots,y_d)$ is a reduction of $\I^{\bf e}$ and hence  $(y_1^*,\dots,y_d^*)_{\n}=G(\I)_{\n}$ for all large $\n.$
\\Denote $G(\fil)$ by $M$ and $\displaystyle{{M}/{H^{0}_{G(\I)_{++}}(M)}}$ by $M'.$ Then $Ass(M')=Ass(M)\setminus V(G(\I)_{++}).$ Let $Ass(M')=\{P_1,\ldots,P_k\}.$ Since $\sqrt{\mathfrak a}=\sqrt{G(\I)_{++}},$ for all $j=1,\ldots,k,$ $P_j\nsupseteq{\mathfrak a}.$ Consider the $k$-vector space $\mathfrak a_{\bf e}/\mm\mathfrak a_{\bf e}.$ Then for each $j=1,\ldots,k,$ $$(P_j\cap \mathfrak a_{\bf e}+\mm \mathfrak a_{\bf e})/\mm \mathfrak a_{\bf e}\neq \mathfrak a_{\bf e}/\mm \mathfrak a_{\bf e}.$$ Since $k$ is infinite, there exists $f_1\in \mathfrak a_{\bf e}\setminus \bigcup_{j=1}^k(P_j\cap\mathfrak a_{\bf e}+\mm \mathfrak a_{\bf e}).$
\\By \cite[Proposition 4.1]{msv}, there exists $\m$ such that $[{H^{0}_{G(\I)_{++}}(M)}]_{\n}=0$ for all $\n\geq\m.$ Let $\n\geq\m$ and $x\in (0:_{M}f_1)_{\n}.$ Then $f_1x'=0$ in $M'$ where $x'$ is the image of $x$ in $M'.$ Since $f_1$ is a nonzerodivisor of $M',$ $x\in [{H^{0}_{G(\I)_{++}}(M)}]_{\n}=0.$ Hence $f_1$ is $M$-filter-regular and its image in the $k$-vector space $\mathfrak a_{\bf e}/\mm\mathfrak a_{\bf e}$ is linearly independent.
\\If $d=1$ then we are done. Suppose $d\geq 2.$ Then by above procedure we can choose $f_1\in {\mathfrak a}_{\bf e}.$ Since $k$ is infinite, by Nakayama's Lemma and Lemma \ref{very very important}, for each $i=2,\ldots,d,$ there exists an element $$\displaystyle{f_i\in {\mathfrak a}_{\bf e}\setminus\lf\lf\mm{\mathfrak a}+(\sum_{j=1}^{i-1}{f_{j}}G(\I))\rg\cup\lf\bigcup_{P\in Ass(M/(\sum\limits_{j=1}^{i-1}{f_{j}})M)\setminus V(G(\I)_{++})} P\rg \rg}.$$ Hence $f_1,\ldots,f_d$ is an $M$-filter-regular sequence and images of $f_1,\ldots,f_d$ in $\mathfrak a_{\bf e}/\mm\mathfrak a_{\bf e}$ are $k$-linearly independent. Since $\mathfrak a$ is generated by elements of degree ${\bf e},$ we get $\mathfrak a=(f_1,\ldots,f_d)G(\I).$
\eepf
\bt\label{hope}
Let $(R,\mm)$ be a Noetherian local ring of dimension $d\geq 1$ with infinite residue field and $\idl$ be $\mm$-primary ideals in $R$ with $\grade(\id)\geq 1.$ Let $\mathcal{F}=\lbrace\fil(\n)\rbrace_{\n\in{\ZZ}^s}$ be an $\I$-admissible filtration of ideals in $R$ and $\mathcal{A}={\lbrace}{x_{ij}\in{I_i}: j=1,\ldots,d;\ii}\rbrace$ be a complete reduction of $\fil.$ Then\ben{
\item for all $\m\in\reg(G(\fil)),$ we have $\m+(d-1){\bf e}\in \mathcal R_{\mathcal A}(\fil),$ 
\item Let $\rrr\in{\R_{\mathcal A_{\fil}}(\fil)}.$ Then for all  $\n\geq \rrr-(d-1){\bf e},$   $H_{G(\fil)_{++}}^d(G(\fil))_{\n}=0$ and hence $\rrr\in\reg^d(G(\fil)).$ }\een
\et
\bpf
$(1)$ By Theorem \ref{complete reduction filter-regular}, there exist $f_1\ldots,f_d\in G(\I)_{\bf e}$ such that $(y_1^*,\dots,y_d^*)=(f_1,\dots,f_d)G(\I)$ where $y_j^*=y_j+\I^{2{\bf e}}$ is the image of $y_j$ in $G(\I)_{\bf e}$ for all $j=1,\ldots,d.$
Now by Lemma \ref{re21}, for all $i\geq 0$ and $\n\in\ZZ^s,$ we have the short exact sequence \beqn \displaystyle [H^{i}_{G(\I)_{++}}(G(\fil))]_{\n}\longrightarrow \left[H^{i}_{G(\I)_{++}}\lf\frac{G(\fil)}{(f_1G(\fil))}\rg\right]_{\n}\longrightarrow [H^{i+1}_{R_{++}}(G(\fil))]_{\n-{\bf e}}.\eeqn Since $\m\in\reg(G(\fil)),$ we get $\left[H^{i}_{G(\I)_{++}}\lf{G(\fil)}/{f_1G(\fil)}\rg\right]_{\n}=0$ for all $\n\geq \m+{\bf e}.$ Now $f_2,\ldots,f_d$ is $G(\fil)/f_1G(\fil)$-filter regular sequence. Hence by Lemma \ref{need}, for all $\n\geq \m+d{\bf e},$ we get $$\left[H^{0}_{G(\I)_{++}}\lf{G(\fil)}/{(f_1,\ldots,f_d)G(\fil)}\rg\right]_{\n}=0.$$ Since ${G(\fil)}/{(y_1^*,\dots,y_d^*)G(\fil)}$ is $G(\I)_{++}$-torsion,for all $\n\geq \m+d{\bf e},$ we have $$\lf {G(\fil)}/{(y_1^*,\dots,y_d^*)G(\fil)}\rg_{\n}=0,$$ i.e. $$\fil(\n)=(y_1,\ldots,y_d)\fil(\n-{\bf e})+\fil(\n+{\bf e}).$$ Thus by Lemma \ref{all}, we get the require result.
\\$(2)$ Let $\rrr\in{\R_{\mathcal A_{\fil}}(\fil)}.$ Then for all $\n\geq \rrr+{\bf e},$ $\lf G(\fil)/(y_1^*,\dots,y_d^*)G(\fil)\rg_{\n}=0.$ Since $G(\fil)/(y_1^*,\dots,y_d^*)G(\fil)$ is $G(\fil)_{++}$-torsion, for all $\n\geq \rrr+{\bf e},$ we have $$H_{G(\fil)_{++}}^0\lf G(\fil)/(f_1,\ldots,f_d)G(\fil)\rg_{\n}=H_{G(\fil)_{++}}^0\lf G(\fil)/(y_1^*,\dots,y_d^*)G(\fil)\rg_{\n}=0.$$ Since $f_1,\dots,f_d$ is $G(\fil)$-filter-regular sequence, by Lemma \ref{re21}, for all $i=1\ldots,d,$ we get the following exact sequence \beqn\left[H_{G(\fil)_{++}}^{d-i}\lf\frac{G(\fil)}{(f_1,\dots,f_{i})G(\fil)}\rg\right]_{\n}&\longrightarrow& \left[H_{G(\fil)_{++}}^{d-(i-1)}\lf\frac{G(\fil)}{(f_1,\dots,f_{i-1})G(\fil)}\rg\right]_{\n-{\bf e}}\\&\longrightarrow &\left[H_{G(\fil)_{++}}^{d-(i-1)}\lf\frac{G(\fil)}{(f_1,\dots,f_{i-1})G(\fil)}\rg\right]_{\n}\eeqn where $f_0=0.$ Therefore we get $H_{G(\fil)_{++}}^d(G(\fil))_{\n}=0$ for all $\n\geq \rrr-(d-1){\bf e}.$
\eepf
\bp\label{first quadrant}
Let $(R,\mm)$ be a Noetherian local ring of dimension $d\geq 1$ with infinite residue field and $\idl$ be $\mm$-primary ideals in $R$ and $s\geq 2.$ Let $\mathcal{F}=\lbrace\fil(\n)\rbrace_{\n\in{\ZZ}^s}$ be an $\I$-admissible filtration of ideals in $R.$ Then $\PP(\fil)\subseteq \NN^s.$
\ep
\bpf
Suppose $\m\in\PP(\fil)$ such that $m_i<0$ for some $i\in\{1,\ldots,s\}.$ Let $j\in\{1,\ldots,s\}$ such that $j\neq i.$ \wlg $j<i.$ Consider the polynomial $$Q(X)=\po(m_1,\ldots,m_{j-1},X,m_{j+1},\ldots,m_i+1,\ldots,m_s)-\po(m_1,\ldots,m_{j-1},X,m_{j+1},\ldots,m_i,\ldots,m_s).$$ Then for any $n\geq m_j,$ by definition of admissible filtration of ideals, we get \beqn
Q(n)&=&\po(m_1,\ldots,m_{j-1},n,m_{j+1},\ldots,m_i+1,\ldots,m_s)-\po(m_1,\ldots,m_{j-1},n,m_{j+1},\ldots,m_i,\ldots,m_s)\\&=&\ho(m_1,\ldots,m_{j-1},n,m_{j+1},\ldots,m_i+1,\ldots,m_s)-\ho(m_1,\ldots,m_{j-1},n,m_{j+1},\ldots,m_i,\ldots,m_s)\\&=&0.
\eeqn  Therefore $Q(X)=0.$ Hence $e_{{\bf e_i}+(d-1){\bf e_j}}(\fil)=0$ which is a contradiction.
\eepf
We recall the following result.
\begin{theorem}\label{r3}{\rm{\cite[Theorem 4.3]{msv}}}
Let $(R,\mm)$ be a Noetherian local ring of dimension $d$ and $\idl$ be $\mm$-primary ideals of $R.$ Let $\fil=\lbrace\fil(\n)\rbrace_{\n\in{\ZZ}^s}$ be an $\I$-admissible filtration of ideals in $R.$ Then
\ben
{\item[\rm(1)] $\lm_R[H_{\R_{++}}^i(\mathcal{R}'(\fil))]_{\n}<\infty$ for all $i\geq 0$ and $\n\in\ZZ^s.$ 
\item[\rm(2)] $\po(\n)-\ho(\n)=\sum\limits_{i\geq 0}(-1)^i\lm_{R}[H_{\R_{++}}^i(\mathcal{R}'(\fil))]_{\n}$ 
for all $\n\in\ZZ^s.$}\een
\end{theorem}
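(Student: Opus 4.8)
The plan is to reduce both statements to the classical graded Grothendieck--Serre theory for the associated multigraded ring $G(\fil)$, which is available because $G(\I)_{\bf 0}=R/\I^{\bf e}$ is Artinian (a product of $\mm$-primary ideals being $\mm$-primary): thus $G(\fil)$, a finitely generated $G(\I)$-module, has $[H^i_{\R_{++}}(G(\fil))]_{\n}=[H^i_{G(\I)_{++}}(G(\fil))]_{\n}$ of finite length for all $i,\n$, vanishing for $\n\gg{\bf 0}$, and satisfies $\lm_R[G(\fil)]_{\n}-P_{G(\fil)}(\n)=\sum_{i}(-1)^i\lm_R[H^i_{G(\I)_{++}}(G(\fil))]_{\n}$ for all $\n\in\ZZ^s$. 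One transports these facts to $\mathcal R'(\fil)$ along the short exact sequence of $\mathcal R'(\I)$-modules
$$0\longrightarrow \mathcal{R}'(\fil)({\bf e})\overset{t^{-{\bf e}}}{\longrightarrow}\mathcal{R}'(\fil)\longrightarrow G'(\fil)\longrightarrow 0,\qquad t^{-{\bf e}}=t_1^{-1}\cdots t_s^{-1},$$
in which $t^{-{\bf e}}$ is a nonzerodivisor (as $\mathcal{R}'(\fil)\subseteq R[t_1^{\pm 1},\dots,t_s^{\pm 1}]$), $G'(\fil)=\bigoplus_{\n\in\ZZ^s}\fil(\n)/\fil(\n+{\bf e})$ is the $\ZZ^s$-graded associated graded ring, $[G'(\fil)]_{\n}=[G(\fil)]_{\n}$ for $\n\geq{\bf 0}$, and $[G'(\fil)]_{\n}$ is a quotient of $[G(\fil)]_{\n^+}$ in general.

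For (1): by Proposition \ref{result 2} applied to the pair $(G'(\fil),G(\fil))$ and to the pair $(\mathcal{R}'(\fil),\mathcal{R}(\fil))$, the modules $[H^i_{\R_{++}}(G'(\fil))]_{\n}$ all have finite length, and $[H^i_{\R_{++}}(\mathcal{R}'(\fil))]_{\n}=0$ for $\n\gg{\bf 0}$ (using Serre vanishing for the finite module $\mathcal{R}(\fil)$ over the standard $\NN^s$-graded ring $\mathcal{R}(\I)$). The long exact sequence of $H^{\bullet}_{\R_{++}}(-)$ attached to the displayed sequence reads, in degree $\n$,
$$[H^i_{\R_{++}}(\mathcal{R}'(\fil))]_{\n+{\bf e}}\longrightarrow[H^i_{\R_{++}}(\mathcal{R}'(\fil))]_{\n}\longrightarrow[H^i_{\R_{++}}(G'(\fil))]_{\n}\longrightarrow[H^{i+1}_{\R_{++}}(\mathcal{R}'(\fil))]_{\n+{\bf e}},$$
and since every $\n$ satisfies $\n+k{\bf e}\gg{\bf 0}$ for $k\gg 0$, a descending induction on $k$ (for each of the finitely many relevant $i$) yields $\lm_R[H^i_{\R_{++}}(\mathcal{R}'(\fil))]_{\n}<\infty$ for all $i,\n$, proving (1).

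For (2), with all lengths finite, set $\chi(\n)=\sum_i(-1)^i\lm_R[H^i_{\R_{++}}(\mathcal{R}'(\fil))]_{\n}$ and define $\chi_{G'},\chi_G$ similarly. Taking alternating sums of lengths in the (finite, finite-length) long exact sequence gives $\chi(\n+{\bf e})-\chi(\n)=-\chi_{G'}(\n)$ for all $\n$. On $\NN^s$ one has $\chi_{G'}(\n)=\chi_G(\n)$ (again by Proposition \ref{result 2}), $\ho(\n+{\bf e})-\ho(\n)=\lm_R(\fil(\n)/\fil(\n+{\bf e}))=\lm_R[G(\fil)]_{\n}$, and $\po(\n+{\bf e})-\po(\n)=P_{G(\fil)}(\n)$ (the polynomial identity recorded in the Remark preceding Lemma \ref{very very important}); hence, by the Grothendieck--Serre formula for $G(\fil)$,
$$(\po-\ho)(\n+{\bf e})-(\po-\ho)(\n)=P_{G(\fil)}(\n)-\lm_R[G(\fil)]_{\n}=-\chi_G(\n)=\chi(\n+{\bf e})-\chi(\n).$$
Thus $g:=(\po-\ho)-\chi$ is constant along ${\bf e}$ on $\NN^s$, and since $g(\n)=0$ for $\n\gg{\bf 0}$ we get $g\equiv 0$ on $\NN^s$. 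To cover all of $\ZZ^s$, repeat the argument with the sequences $0\to\mathcal{R}'(\fil)({\bf e_j})\xrightarrow{t_j^{-1}}\mathcal{R}'(\fil)\to G^{(j)}(\fil)\to 0$ for $j=1,\dots,s$, whose cokernels $G^{(j)}(\fil)$ vanish in degrees with $n_j<0$ and, upon iterating the $t_j^{-1}$, reduce to partial associated graded modules over rings with Artinian base; tracking Euler characteristics shows that $g$ is in fact ${\bf e}$-periodic on all of $\ZZ^s$, whence $g\equiv 0$ because every $\n$ has $\n+k{\bf e}\in\NN^s$ for $k\gg 0$. This gives (2).

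The step I expect to be the main obstacle is precisely this last passage from $\NN^s$ to $\ZZ^s\setminus\NN^s$ in (2): $G'(\fil)$ and the $G^{(j)}(\fil)$ coincide with genuine associated graded rings only over the nonnegative octant, so identifying the Euler-characteristic corrections $\chi_{G'}$, $\chi_{G^{(j)}}$ with the Hilbert-polynomial differences $\po(\,\cdot+{\bf e})-\po$ and $\po(\,\cdot+{\bf e_j})-\po$ off the coordinate hyperplanes requires a careful simultaneous bookkeeping over the $s$ coordinate directions together with the Rees/extended-Rees comparison of Proposition \ref{result 2}. By contrast, the finiteness in (1) and the formula on $\NN^s$ in (2) are comparatively routine once the reduction to $G(\fil)$ and Serre vanishing are in hand.
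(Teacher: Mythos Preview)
The paper does not prove this theorem; it is merely quoted from \cite[Theorem~4.3]{msv} under ``We recall the following result,'' so there is no in-paper argument to compare against. Your overall strategy---reducing to the Grothendieck--Serre formula for $G(\fil)$ over the Artinian-based ring $G(\I)$ via the short exact sequence $0\to\mathcal R'(\fil)({\bf e})\to\mathcal R'(\fil)\to G'(\fil)\to 0$ and descending induction along $\n\mapsto\n+{\bf e}$---is the natural one and is essentially how the cited result is proved.

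Your treatment of (1) and of (2) on $\NN^s$ is correct. The obstacle you flag at the end, however, is illusory, and the detour through the individual $t_j^{-1}$ sequences is unnecessary. The identity $g(\n+{\bf e})=g(\n)$ already holds on all of $\ZZ^s$. Indeed, applying Proposition~\ref{result 2} to $S'=G'(\fil)$, $S=G(\fil)$ and taking alternating sums of lengths in the five-term sequence (together with the isomorphisms for $i\geq 2$) gives
\[
\chi_{G'}(\n)=\chi_G(\n)+\lm_R\!\left[G'(\fil)/G(\fil)\right]_{\n}=\chi_G(\n)+\lm_R[G'(\fil)]_{\n}-\lm_R[G(\fil)]_{\n}
\]
for \emph{every} $\n\in\ZZ^s$, not only for $\n\in\NN^s$. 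Combining this with the Grothendieck--Serre formula for $G(\fil)$ on all of $\ZZ^s$ (which you yourself invoke) and the identities $\ho(\n+{\bf e})-\ho(\n)=\lm_R[G'(\fil)]_{\n}$ and $\po(\n+{\bf e})-\po(\n)=P_{G(\fil)}(\n)$ (both valid for every $\n\in\ZZ^s$), one gets
\[
(\po-\ho)(\n)-(\po-\ho)(\n+{\bf e})=\lm_R[G'(\fil)]_{\n}-P_{G(\fil)}(\n)=\chi_{G'}(\n)=\chi(\n)-\chi(\n+{\bf e}).
\]
Hence $g=(\po-\ho)-\chi$ is ${\bf e}$-periodic on all of $\ZZ^s$, and since $g(\n)=0$ for $\n\gg{\bf 0}$ it vanishes identically. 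No separate bookkeeping over coordinate directions is required.
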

\bt\label{pcj}
Let $(R,\mm)$ be a Noetherian local ring of dimension $d\geq 1$ with infinite residue field and $\idl$ be $\mm$-primary ideals in $R$ with $\grade(\id)\geq 1.$ Let $\mathcal{F}=\lbrace\fil(\n)\rbrace_{\n\in{\ZZ}^s}$ be an $\I$-admissible filtration of ideals in $R$ and $\mathcal A_{\fil}=\lbrace a_{ij}\in I_i:j=1,\ldots,d;\ii\rbrace$ be any complete reduction of $\fil.$ Let $H_{\R(\I)_{++}}^i(\R(\fil))_{\n}=0$ for all $0\leq i\leq d-1$ and $\n\geq {\bf 0}.$ Then 
\ben
{\rm
\item[(1)] for all $\rrr\in{\R_{\mathcal A_{\fil}}(\fil)}$ with $\rrr\geq (d-1){\bf e},$ we have $\rrr-(d-1){\bf e}\in \PP(\fil),$
\item[(2)] if $s\geq 2,$ for all $\n\in\PP(\fil),$ we have $\n+(d-1){\bf e}\in \R_{\mathcal A_{\fil}}(\fil).$
}\een 	In particular, for $s\geq 2,$ there exists a one-to-one correspondence 
$$f:{\displaystyle{\Large{\Large{\PP(\fil)\longleftrightarrow\lbrace{\rrr\in{\R_{\mathcal A_{\fil}}(\fil)}\mid\rrr\geq (d-1){\bf e}}\rbrace}}}}$$ defined by $f(\n)=\n+(d-1){\bf e}$ where $f^{-1}(\rrr)=\rrr-(d-1){\bf e}.$\et
\bpf
Consider the short exact sequence of $\R(\I)$-modules $$0\longrightarrow \mathcal{R}'(\fil)({\bf e})\longrightarrow \mathcal{R}'(\fil)\longrightarrow G'(\fil)\longrightarrow 0.$$ This induces the long exact sequence of $R$-modules
{\small\beqnn\label{up}\cdots\longrightarrow [H_{{\R}_{++}}^i(\mathcal{R}'(\fil))]_{\n+{\bf e}} \longrightarrow [H_{{\R}_{++}}^i(\mathcal{R}'(\fil))]_{\n} 
\longrightarrow [H_{{\R}_{++}}^i(G'(\fil))]_{\n}
 \longrightarrow [H_{{\R}_{++}}^{i+1}(\mathcal{R}'(\fil))]_{\n+{\bf e}} \longrightarrow \cdots.\eeqnn}\noindent
$(1)$ Let $\rrr\in{\R_{\mathcal A_{\fil}}(\fil)}$ with $\rrr\geq (d-1){\bf e}.$ Therefore by Theorem \ref{hope}, we get $H_{G(\fil)_{++}}^d(G(\fil))_{\n}=0$ for all $\n\geq \rrr-(d-1){\bf e}.$ Thus from the long exact sequence (\ref{up}), using Proposition \ref{result 2} and the change of ring principle, we have $[H_{\R_{++}}^d(\mathcal{R}(\fil))]_{\n}=0$ for all $\n\geq \rrr-(d-1){\bf e}.$ Therefore again using the Difference Formula (Theorem \ref{r3}), we get $\po(\n)=\ho(\n)$ for all $\n\geq \rrr-(d-1){\bf e}.$ 
\\$(2)$ Let $\m\in\PP(\fil).$ Then $\m\geq {\bf 0}$ and for all $\n\geq\m,$ by Proposition \ref{result 2}, the Difference Formula (Theorem \ref{r3}) and the change of ring principle, we have $$0=\po(\n)-\ho(\n)=(-1)^d\lm_{R}[H_{\R_{++}}^d(\mathcal{R}(\fil))]_{\n}.$$  Therefore by Proposition \ref{result 2} and the change of ring principle, we get that $H_{G(\fil)_{++}}^i(G(\fil))_{\n}=0$ for all $i\geq 0$ and $\n\geq \m.$ For all $j=1,\ldots,d,$ let $y_j=a_{1j}\dots a_{sj}.$ Then by Theorem \ref{complete reduction filter-regular}, there exist $f_1,\dots, f_d\in G(\I)_{\bf e}$ such that $f_1,\dots,f_d$ is $G(\fil)$-filter-regular sequence
and $(y_1^*,\dots,y_d^*)=(f_1,\dots,f_d)G(\I)$ where $y_j^*=y_j+\I^{2{\bf e}}$ is the image of $y_j$ in $G(\I)_{\bf e}$ for all $j=1,\ldots,d.$ Hence by Lemma \ref{need}, for all $\n\geq \m+d{\bf e},$ we have $$H_{G(\fil)_{++}}^0\lf G(\fil)/(y_1^*,\dots,y_d^*)G(\fil)\rg_{\n}=H_{G(\fil)_{++}}^0\lf G(\fil)/(f_1,\ldots,f_d)G(\fil)\rg_{\n}=0.$$ Since $G(\fil)/(y_1^*,\dots,y_d^*)G(\fil)$ is $G(\fil)_{++}$-torsion, for all $\n\geq \m+d{\bf e},$ we get $$\fil(\n)=(y_1,\ldots,y_d)\fil(\n-{\bf e})+\fil(\n+{\bf e}).$$ Therefore by Lemma \ref{all}, for all $\n\geq \m+d{\bf e},$ we have $\fil(\n)=(y_1,\ldots,y_d)\fil(\n-{\bf e}).$ Hence $\m+(d-1){\bf e}\in{\R_{\mathcal A_{\fil}}(\fil)}.$
\eepf
\begin{remark}
Note that if $\R(\fil)$ is \CM then $H_{\R(\I)_{++}}^i(\R(\fil))_{\n}=0$ for all $0\leq i\leq d-1$ and $\n\geq {\bf 0}$ \cite[Remark 3.4]{PV3}. Let $\idl$ be monomial ideals in $k[X_1,\ldots,X_d].$ Then By Hochster's Theorem \cite[Theorem 6.3.5]{BH}, $\R(\fil)$ is \CM where $\fil=\{\ov{{\I}^{\n}}\}_{\n\in\ZZ^s}.$
\end{remark}
As a consequence of Theorem \ref{pcj}, we obtain a result of Marley \cite[Theorem 2]{marley}, \cite[Corollary 3.8]{marleythesis} and Ooishi \cite[Proposition 4.10]{aooishi}. 
\bt
Let $(R,\mm)$ be a $d$-dimensional ($d\geq 1$) \CM local ring with an infinite residue filed. Let $I$ be an $\mm$-primary ideal and $\fil=\{I_n\}_{n\in\ZZ}$ be an $I$-admissible filtration such that $\depth G(\fil)_{+}\geq d-1.$ Then $r(\fil)=n(\fil)+d.$
\et
\bpf
Suppose $\depth G(\fil)_{+}= d.$ Then $H_{G(I)_{+}}^i(G(\fil))=0$ for all $0\leq i\leq d-1.$ Now $s=1$ implies $G'(\fil)=G(\fil).$ Hence by the exact sequence (\ref{up}), $H_{{\R}_{+}}^{i}(\R'(\fil))=0$ for all $0\leq i\leq d-1.$ Suppose $\depth G(\fil)_{+}\geq d-1.$ Then by \cite[Proposistion 4.2.10]{blancafort thesis}, $\depth_{{\R}_{+}}(\R'(\fil))=d.$ Hence by the exact sequence (\ref{up}), $H_{{\R}_{+}}^{i}(\R'(\fil))=0$ for all $0\leq i\leq d-1.$ Hence in both cases, $H_{{\R}_{+}}^{i}(\R'(\fil))=0$ for all $0\leq i\leq d-1.$  Now using the same procedure as in Theorem \ref{pcj}, we get the required result.\eepf
\begin{example}
\rm{
Let $R=k[|X,Y,Z|].$ Then $R$ is a $3$-dimensional regular local ring with unique maximal ideal $\mm=(X,Y,Z).$ Since $R$ is regular local ring, $\mm$ is normal ideal. Let $I=\mm$ and $J=\mm^2.$ Then $\mathcal A=\begin{pmatrix}
  X & Y & Z\\
  X^{2} & Y^{2} & Z^2
 \end{pmatrix}$ is a \crr for the filtration $\I=\lbrace I^rJ^s\rbrace_{r,s\in\ZZ}$ and $(X^3,Y^3,Z^3)I^2J^2=I^3J^3.$ Since $I,J$ are monomial ideals and integrally closed, by Hochster's theorem \cite[Theorem 6.3.5]{BH}, $\R(\I)$ is \CM and hence $H_{\R(\I)_{++}}^i(\R(\I))_{\n}=0$ for all $0\leq i\leq 3.$ Therefore by \cite[Theorem 3.5]{powerproduct}, $\pf(\n)=\hf(\n)$ for all $\n\in\NN^s.$ 
}\end{example}
In the following example we show that we cannot drop the condition on $H_{\R_{++}}^i(\R(\fil))_{\n}$ for all $0\leq i\leq d-1$ in Theorem \ref{pcj}.
\begin{example}
\rm{
Let $R=k[|X,Y|].$ Then $R$ is a 2-dimensional \CM local ring with unique maximal ideal $\mm=(X,Y).$ Let $I=\mm^2$ and $J=(X^2,Y^2).$ Since $(X^{4},Y^{4})IJ=(X^{4},Y^{4})\mm^{4}=\mm^{8}=I^2J^2,$ we have $\mathcal A=\begin{pmatrix}
  X^{2} & Y^{2}\\
  X^{2} & Y^{2}
 \end{pmatrix}$ is a \crr for the filtration $\I=\lbrace I^rJ^s\rbrace_{r,s\in\ZZ}.$ Since $\widebreve{{\I}^{\bf e_2}}=\breve{J}=(I^k{J}^{1+k}:{I^kJ^k})$ for some large $k,$ $JI=I^2$ and $\mm$ is parameter ideal, we have $$\breve{J}=(I^{2k+1}:I^{2k})=(\mm^{4k+2}:\mm^{4k})=\mm^2\neq J\mbox{ and hence by \cite[Proposition 3.5]{msv} }H_{\R_{++}}^1(\R(\I))_{(0,1)}\neq 0.$$
 Since $(X^{4},Y^{4})IJ=I^2J^2,$ we get ${\bf e}\in \R_{\mathcal A_{\I}}(\I.)$ 
 \\ For $n\geq 1,$ $$\lm\lf\frac{R}{I^n}\rg=\lm\lf\frac{R}{\mm^{2n}}\rg=\binom{2n+1}{2}=4\binom{n+1}{2}-n=P_I(n),$$ Since $J$ is parameter ideal and $JI=I^2,$ $$\lm\lf\frac{R}{J^n}\rg=4\binom{n+1}{2}=P_{J}(n),$$ $$\lm\lf\frac{R}{(IJ)^n}\rg=\lm\lf\frac{R}{\mm^{4n}}\rg=\binom{4n+1}{2}=16\binom{n+1}{2}-6n=P_{IJ}(n)$$ and
$$\lm\lf\frac{R}{I^{2n}J^{n}}\rg=\lm\lf\frac{R}{\mm^{6n}}\rg=\binom{6n+1}{2}=36\binom{n+1}{2}-15n=P_{I^2J}(n).$$ Hence $e_0(I)=e_0(J)=4.$ Now for large $n,$ $\displaystyle P_{IJ}(n)=\lm\lf\frac{R}{(IJ)^n}\rg=P_{\I}(n{\bf e})$ and $\displaystyle P_{I^2J}(n)=\lm\lf\frac{R}{I^{2n}J^{n}}\rg=P_{\I}(2n,n).$ Comparing the coefficients on both sides we get 
$$\pf(r,s)=4\binom{r+1}{2}+4\binom{s+1}{2}+4rs-r-s.$$ Then $\pf(0,1)=3\neq 4=\lm\lf{R}/{J}\rg.$ Hence $(0,0)\notin \PP(\I).$ 

}
\end{example}

\end{document}